\newtheorem{theorem}{Theorem}
\newtheorem{proposition}[theorem]{Proposition}
\newtheorem{corollary}[theorem]{Corollary}
\newtheorem{conjecture}{Conjecture}
\newcommand{\Res}{\text{Res}}
\newcommand{\dd}[1]{\,\textit{d}#1}
\newcommand{\pr}{\mathbb{P}}
\newcommand{\ignore}[1]{}
\newcommand{\nextrow}[7]{\multirow{2}{*}{$#1$} & \multirow{2}{*}{$#2$} & \multirow{2}{*}{$#3$} & \multirow{2}{*}{$#4$} & $#5$ & \multirow{2}{*}{$#7$}\\ & & & & $#6$ & }
\title{Primes Appearing in the Tower Factorization of Integers}
\author{Patrick Devlin\\ \textit{Department of Mathematics,}\\ \textit{Rutgers University, Piscataway, NJ}\\$\mathtt{prd41@math.rutgers.edu}$ \and
Edinah K. Gnang\thanks{Work also done while this author was at Rutgers University, Piscataway, NJ}\\ \textit{School of Mathematics,}\\ \textit{Institute for Advanced Study, Princeton, NJ}\\$\mathtt{gnang@ias.edu}$}
\date{June 14, 2014}
\begin{document}
\maketitle

\begin{abstract}
In this note, we introduce and discuss a new (and still very much open!) problem in elementary number theory.  In particular, every number can be uniquely expressed as a `tower factorization' into primes, and for $q$ prime, we can let $M(q)$ denote the set of all integers whose tower factorization contains the prime $q$.  We then explore the limiting densities of these sets.  Although we are able to obtain some results, there is still much more to be done.
\end{abstract}

\section{Introduction}\label{sec:introduction}
Everybody knows---or at least readily believes---that if you pick a positive integer at random, then the probability that it's even is 1/2.  But what happens if we relax the condition that the randomly selected number must be \textit{divisible} by $2$?  For example, what if we choose to also allow numbers of the form $5^2 \cdot 7^3$, or $3 \cdot 17^{(5^2 \cdot 13^{(2 \cdot 11)})} \cdot 31$, or even $11^{(3^{(5^{2} \cdot 7)} \cdot 5^5)} \cdot 23^{19}$ to be ``divisible" by 2?  Then in general, these numbers do not contain the prime $2$ in their usual prime factorizations (perhaps not even as exponents!), but nonetheless, they all contain a $2$ somewhere in a more refined factorization.  We call this representation the \textit{tower factorization} of a number, which can be defined as follows\footnote{This representation of a number was motivated by computational considerations as discussed in \cite{Gnang}.}

\begin{center}
\fbox{\parbox[c]{5.5in}{
Let $n \geq 1$ be any arbitrary integer.  Then its \textit{tower factorization} is recursively given by
\begin{itemize}
\item[{(0)}] If $n=1$, then its tower factorization is just $1$.
\item[{(1)}] If $n > 1$, let $n=p_1 ^{e_1} \cdots p_{k} ^{e_k}$ be its usual prime factorization.  Then the tower factorization of $n$ is given by
\[
n = p_1 ^{ ( f_1 )} \cdots p_{k} ^{ ( f_k )} = \prod _{i=1} ^{k} p_i ^{(f_i)},
\]
where $f_i$ is the tower factorization of $e_i$.
\end{itemize}
}}
\end{center}
\paragraph*{}In other words, to obtain the tower factorization of $n$, we first write $n$ into its prime factorization $n = p_1 ^{e_1} \cdots p_{k} ^{e_k}$.  Then we factor each exponent $e_i > 1$ into its prime factorization $e_i = p_{i,1} ^{f_{i,1}} \cdots p_{i,k_{i}} ^{f_{i,k_{i}}}$.  We continue to factor each subsequent exponent of those prime factorizations until $n$ is writen in the form
\[
n = \prod _{i} p_{i} ^{\left( \prod_j p_{i,j} ^{\ \ (\prod \cdots )} \right)},
\]
for some prime numbers $p_{i}, p_{i,j}, \ldots$.  Then by the fundamental theorem of arithmetic, this tower factorization exists and is unique up to reordering.

\paragraph*{}With this in place, we now pose the main question of this paper:
\begin{quote}
Let $q$ be any fixed prime number (e.g., $q=2$).  Then what is the probability that a randomly selected positive integer contains the prime $q$ in its \textit{tower factorization}?
\end{quote}
To be perfectly precise, let $M (q)$ be the set of all positive integers whose tower factorization contains the prime $q$ at least once\footnote{For later notational convenience, for all $q$, we set $1 \notin M(q)$.}.  Then we wish to determine the natural density, $d(q)$, of the set $M(q)$.  That is to say, what is
\[
d(q) := \lim_{N \to \infty} \dfrac{|M(q) \cap \{1, 2, \ldots , N\}|}{N},
\]
and does such a density even exist?  (Stated more colorfully, $d(q)$ is the probability that a positive integer `chosen at random' contains the prime $q$ in its tower factorization.)

\subsubsection*{Initial thoughts}
Before continuing to discuss the problem in its generality, it is worth-while to first try to develop some intuition for the smallest possible case---namely, $q=2$.  On the one hand, the set $M(2)$ contains every even number, so right away we see that if its density $d(2)$ exists, then it would be at least $1/2$.  But as \textbf{Table \ref{table_factorizations_examples}} illustrates, $M(2)$ also contains \textit{many} odd numbers as well, and it is not immediately clear how much of a contribution these would have on the density of $M(2)$.
\begin{table}[h]
\begin{center}
\begin{tabular}{|c|c|c|}\hline
\textbf{Number, $n$} & \textbf{Tower Factorization} & \textbf{Primes $q$ with $n \in M(q)$}\\\hline
1 & 1 & $\emptyset$\\\hline
144 & $2^{(2^2)} \cdot 3^2$ & $\{2, 3\}$\\\hline
625 & $5 ^{(2^2)}$ & $\{2, 5\}$\\\hline
33787663 & $7 \cdot 13^{2 \cdot 3}$ & $\{2, 3, 7, 13\}$\\\hline
$37349 \cdot 11^{669921875} $ & $13^3 \cdot 17 \cdot 11^{(7^3 \cdot 5^{(3^2)})}$ & $\{2, 3, 5, 7, 11, 13, 17\}$\\\hline
\end{tabular}
\caption{Examples of tower factorizations and to which $M(q)$ each number belongs}
\label{table_factorizations_examples}
\end{center}
\end{table}\vspace*{-.35in}
\paragraph*{}On the other hand, it is easy to see that $M(2)$ does not contain any odd square-free integers.  Therefore, since the odd square-free integers have density $(2/3) \cdot (6 / \pi ^2) = 4 / \pi ^2$, we have that if $d(2)$ exists, then it is bounded by $d(2) \leq 1 - 4 / \pi ^2 = 0.592715\ldots$.  Thus, if the limiting density of $M(2)$ exists, we intuitively find it would satisfy $0.5 \leq d(2) \leq 0.592716$.
\begin{figure}[htb]
\centering
\includegraphics[width=\textwidth]{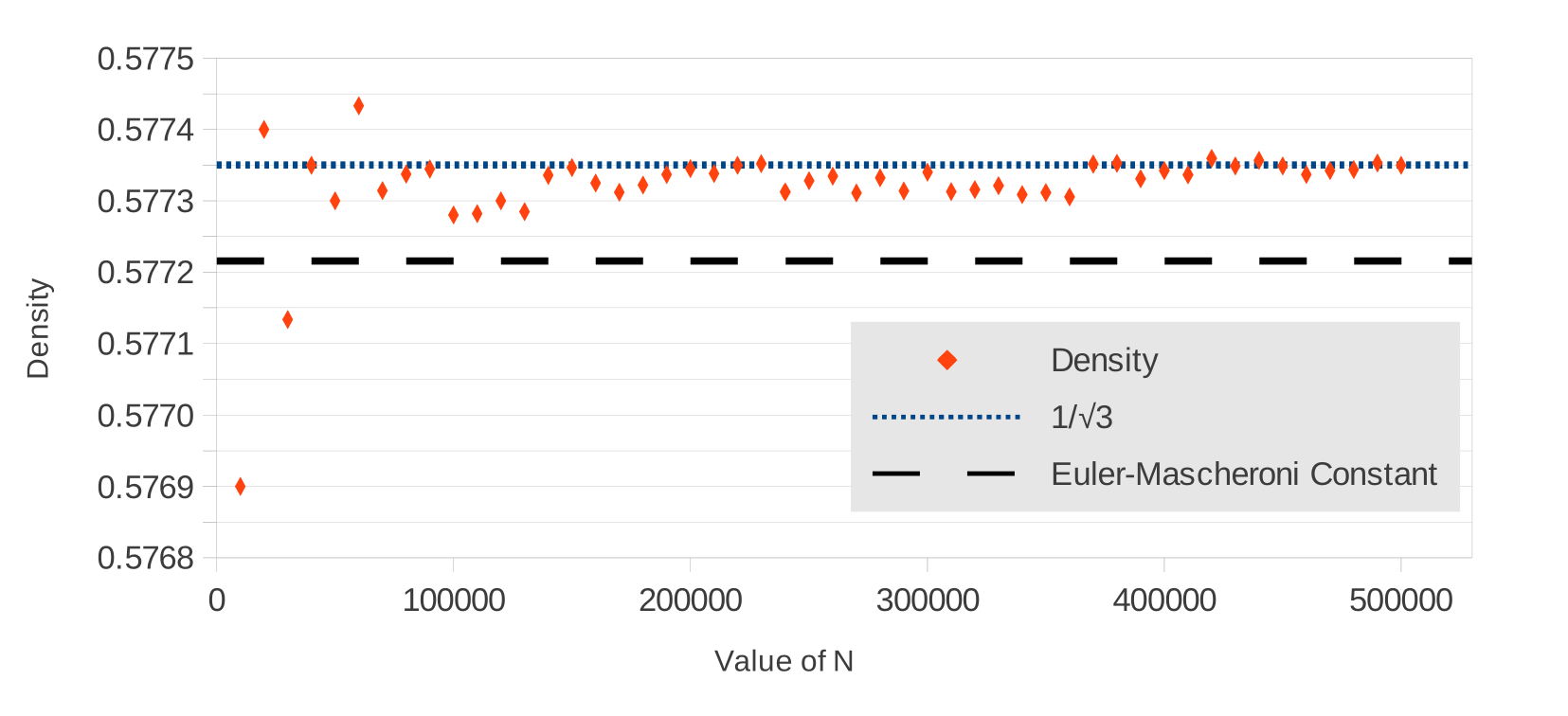}
\caption{Density of $M(2)$ in $\{1, 2, \ldots , N\}$ for various $N$}
\label{fig:experimental_data}
\end{figure}
\paragraph*{}Beyond this first analysis, the intuitive arguments become more difficult.  For instance, does the density $d(2)$ exist, and if so is it closer to $0.5$ or to $0.592716$?  Although-- or perhaps \textit{because}-- these sort of questions are difficult to answer by `blind' intuition, looking at computer data can be particularly insightful.  Slapping together some na\"ive code, we readily compute the density $|M(2) \cap \{1, 2, \ldots , N\}|/N$ for various values $N$ as plotted in \textbf{Figure \ref{fig:experimental_data}}.
\paragraph*{}Now that we can `see' how the density of $M(2)$ in $\{1, 2, \ldots, N\}$ changes as $N$ grows larger and larger, we are able to come up with better guesses.  For example, although at this point, we have virtually no rigorous results whatsoever, the experimental data \textit{suggests} that the limit for $d(2)$ converges and that the first digits of $d(2)$ \textit{seem to be} about $0.5773 \ldots$, which leads to several tempting conjectures.  Before continuing, we invite and encourage the reader to spend a moment or so to form an opinion on what they expect about this limiting density.

\subsection*{Outline of paper}
It is first best to note that the arguments in this paper are all very elementary, and the authors are certainly not number theorists.  Therefore, those bored with the presentation can take solace in that they may be able to surpass our results easily (which they are very encouraged to do!).
\paragraph*{}We begin in \textbf{Section \ref{sec:formulas}} by deriving a few formulas for the densities $d(q)$ [establishing their existence] with the result of \textbf{Proposition \ref{tempting_representation}} being particularly inviting.  Then in \textbf{Section \ref{sec:bounds}}, we use these formulas to get a few bounds on $d(q)$ and establish their asymptotics.  We also settle some tempting conjectures about the value of $d(2)$.  We conclude with a few obvious open questions, and we provide some numerically computed bounds for $d(q)$ in an appendix.

\section{Representations for $d(q)$}\label{sec:formulas}
As the name of this section might suggest, we will now show that for all primes $q$, the limiting density $d(q)$ is given by any one of several convergent formulas, and therefore it does in fact exist.  However, as the existence of subsequent sections might suggest, these formulas are not in a ``closed enough" form to just call it a day after their derivation!
\paragraph*{}The formulas for these limiting densities are found by what has now become a routine `probabilistic' approach, and so for ease of reading, proofs are confined to their most important aspects.  The argument has one `hand-wavy' part (which we will be kind enough to point out), but we believe that this presentation is the most intuitive one, and we trust our readers will forgive us.

\begin{theorem}\label{formulas}
Let $q$ be any arbitrary prime, and let $M(q)$ be as previoulsy defined.  Then the density $d(q)$ exists and is given by
\[
d(q) = 1 - \left( 1- \dfrac{1}{q}\right) \cdot \prod_{\text{$p \neq q$, prime}} \left[1 - \left( 1-\dfrac{1}{p} \right )\sum_{m \in M(q)} \dfrac{1}{p^m} \right].
\]
Moreover, if $M^{c} (q)$ is defined as\footnote{Note that $\{0, 1\} \subseteq M^{c} (q)$} $M^{c} (q) := \{z \in \{0, 1, \ldots \} : z \notin M(q)\}$, then we also have
\begin{eqnarray*}
d(q) &=& 1 - \left( 1- \dfrac{1}{q}\right) \cdot \prod_{\text{$p \neq q$, prime}} \left[\left( 1-\dfrac{1}{p} \right )\sum_{m \in M^c (q)} \dfrac{1}{p^m} \right]\\
&=& 1 - \left(\sum_{m \in M^c (q)} q^{-m}\right)^{-1} \cdot \prod_{\text{$p$, prime}} \left[\left( 1-\dfrac{1}{p} \right )\sum_{m \in M^c (q)} \dfrac{1}{p^m} \right].
\end{eqnarray*}
\end{theorem}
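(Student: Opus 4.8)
The plan is to make rigorous the heuristic that a uniformly random $n \in \{1, \dots, N\}$ ``looks like'' the infinite family of independent random variables $e_p := v_p(n)$ (the $p$-adic valuations), with $\Pr[e_p = k] = (1-1/p)\,p^{-k}$. The exact statement behind this is elementary: for any finite set $S$ of primes and any exponents $(a_p)_{p \in S}$, the set $\{\,n : v_p(n) = a_p \text{ for all } p \in S\,\}$ is a finite union of residue classes and hence has natural density exactly $\prod_{p \in S}(1-1/p)\,p^{-a_p}$. Alongside this I would record the recursive description of $M^c(q)$ obtained by unwinding the definition of the tower factorization: by induction on $n = \prod_i p_i^{e_i}$, the prime $q$ fails to occur in the tower factorization of $n$ if and only if $q \neq p_i$ and $e_i \in M^c(q)$ for every $i$. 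In valuation language this reads
\[
n \in M^c(q) \iff e_q = 0 \ \text{ and } \ e_p \in M^c(q) \ \text{ for every prime } p \neq q,
\]
and since the smallest element of $M(q)$ is $q \geq 2$ (an easy induction), we always have $\{0,1\} \subseteq M^c(q)$, so the condition at a prime $p$ is automatic whenever $e_p \in \{0,1\}$; in particular $n = 1$ (all valuations zero) is correctly placed in $M^c(q)$.

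Next I would assemble these into the middle displayed formula. Fix a cutoff $P \geq q$. The number of $n \leq N$ divisible by $p^2$ for some prime $p > P$ is at most $N\sum_{p > P} p^{-2}$, a $o(N)$ quantity that shrinks to $0$ with $P$; for every other $n$ we have $e_p \in \{0,1\} \subseteq M^c(q)$ for all $p > P$, so by the characterization above such an $n$ lies in $M^c(q)$ precisely when $e_q = 0$ and $e_p \in M^c(q)$ for all primes $p \le P$ with $p \neq q$. Applying the CRT density statement to the finite prime set $\{p \le P\}$ (truncating each $e_p \le K$ with geometrically small error, then letting $K \to \infty$) gives $|M^c(q) \cap \{1,\dots,N\}|/N \to (1-1/q)\prod_{q \neq p \le P}\big[(1-1/p)\sum_{m \in M^c(q)} p^{-m}\big]$ as $N \to \infty$, up to an additive error bounded by a multiple of $\sum_{p > P} p^{-2}$. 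Letting $P \to \infty$ — the infinite product converges absolutely since $(1-1/p)\sum_{m \in M(q)} p^{-m} \le p^{-2}$ — pins the limit down, simultaneously proving that $d(q)$ exists and equals the second displayed expression. I expect this paragraph to be the only real work: the bookkeeping needed to pass from ``independent valuations'' to an honest density, to control the large-prime tail uniformly in $N$, and to justify interchanging the limits $N \to \infty$, $P \to \infty$ (and $K \to \infty$) is exactly the ``hand-wavy'' point flagged in the text.

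The remaining two formulas are then pure algebra. From $\sum_{m \ge 0} p^{-m} = (1-1/p)^{-1}$ we get $(1-1/p)\sum_{m \in M^c(q)} p^{-m} = 1 - (1-1/p)\sum_{m \in M(q)} p^{-m}$, and substituting this into the second formula produces the first (and incidentally shows the first-formula product converges). For the third formula, note that the $p = q$ factor missing from the second formula's product is $(1-1/q)\sum_{m \in M^c(q)} q^{-m}$; multiplying and dividing by it and cancelling one copy of $(1-1/q)$ lets the product run over all primes at the cost of the prefactor $\big(\sum_{m \in M^c(q)} q^{-m}\big)^{-1}$, which is the asserted identity.
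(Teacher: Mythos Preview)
Your proof is correct and follows essentially the same route as the paper: characterize membership in $M^c(q)$ via the $p$-adic valuations $e_p$ and exploit the asymptotic independence of the $e_p$ for a uniformly random $n\le N$. The only differences are cosmetic---you derive the $M^c(q)$ formula first and obtain the other two by algebra, whereas the paper derives the $M(q)$ formula first---and, notably, you actually supply the truncation/CRT tail argument that the paper explicitly flags as ``hand-wavy'' and leaves to the reader.
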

\begin{proof}
Let $n \in \{1, \ldots , N\}$ be chosen uniformly at random (we will let $N$ go to infinity).  Then for all primes $p$ and all nonnegative integers $m$, let $D(p,m)$ be the event that $p ^m$ exactly divides $n$ (that is $p^m$ divides $n$, but $p^{m+1}$ does not).  Then the probability that $n$ is in $M(q)$ is given by:
\begin{eqnarray*}
\pr (n \in M(q)) &=& \pr \left [ (\text{$q$ divides $n$}) \cup \left( \text{$p^m$ exactly divides $n$, where $p \neq q$, and $m \in M(q)$} \right )  \right ]\\
&=& \pr \left [ (\text{$q$ divides $n$}) \cup \left( \bigcup_{\text{$p \neq q$, prime \ }} \bigcup_{m \in M(q)} D(p,m) \right )  \right ].
\end{eqnarray*}
These events are not disjoint, which makes this probability difficult to deal with.  However, by considering $\pr (n \notin M(q)) = 1 - \pr(n \in M(q))$, we have the simplification
\begin{eqnarray*}
1 - \pr (n \in M(q)) &=& \pr \left ( \left [ (\text{$q$ divides $n$}) \cup \left( \bigcup_{\text{$p \neq q$, prime \ }} \bigcup_{m \in M(q)} D(p,m) \right ) \right] ^c  \right )\\
&=& \pr \left ( (\text{$q$ does not divide $n$}) \cap \left[ \bigcap_{\text{$p \neq q$, prime \ }} \left( \bigcup_{m \in M(q)} D(p,m) \right) ^c \right ]  \right ).
\end{eqnarray*}
As $N$ goes to infinity, we may treat these events as independent\footnote{This is the aforementioned `hand-wavy' part, but it is neither particularly difficult nor enlightening to make this step rigorous.  In fact, this would be a good exercise for any undergraduate students reading this.  Note that in going to the infinite product, attention needs to be given to show that the error terms do not accumulate.}.  Thus, as $N \to \infty$ we have
\begin{eqnarray*}
1- d(q) &=& \lim_{N \to \infty} \pr (\text{$q$ does not divide $n$}) \cdot \pr \left[ \bigcap_{\text{$p \neq q$, prime \ }} \left( \bigcup_{m \in M(q)} D(p,m) \right) ^c \right ]\\
&=& \lim_{N \to \infty}  \pr (\text{$q$ does not divide $n$}) \cdot \prod_{\text{$p \neq q$, prime}} \left[1 - \pr \left( \bigcup_{m \in M(q)} D(p,m) \right) \right].
\end{eqnarray*}
For any fixed $p$, the events $D(p,m)$ are clearly disjoint.  Finally, as $N$ goes to infinity, we have
\begin{eqnarray*}
\lim_{N \to \infty} \pr (D (p,m)) &=& \lim_{N \to \infty} \pr (\text{$p^m$ divides $n$}) \cdot \pr (\text{$p^{m+1}$ does not divide $n$ given $p^m$ does})\\
&=& \left ( \dfrac{1}{p^m} \right) \cdot \left( 1 - \dfrac{1}{p}\right).
\end{eqnarray*}
Thus, our formula simplifies to
\begin{eqnarray*}
d(q) &=& 1 - \lim_{N \to \infty} \pr (\text{$q$ does not divide $n$}) \cdot \prod_{\text{$p \neq q$, prime}} \left[1 - \sum_{m \in M(q)} \left ( \dfrac{1}{p^m} \right) \cdot \left( 1 - \dfrac{1}{p}\right) \right]\\
&=& 1 - \left( 1- \dfrac{1}{q}\right) \cdot \prod_{\text{$p \neq q$, prime}} \left[1 - \left( 1-\dfrac{1}{p} \right )\sum_{m \in M(q)} \dfrac{1}{p^m} \right].
\end{eqnarray*}
\paragraph*{}On the other hand, since we have
\[
1 - \pr \left(\bigcup_{0 \leq m \in M(q)} D(p,m) \right) = \pr \left( \bigcup _{0 \leq m \notin M(q)} D(p,m) \right),
\]
then if we define $M^c (q) = \{z \in \{0, 1, \ldots \} : z \notin M(q)\}$ (with $\{0,1\} \subseteq M^c(q)$), we have
\begin{eqnarray*}
d(q) &=& 1- \lim_{N \to \infty}  \pr (\text{$q$ does not divide $n$}) \cdot \prod_{\text{$p \neq q$, prime}} \left[\pr \left( \bigcup _{0 \leq m \notin M(q)} D(p,m) \right) \right] \\
&=& 1 - \lim_{N \to \infty}  \pr (\text{$q$ does not divide $n$}) \cdot \prod_{\text{$p \neq q$, prime}} \left[\sum_{m \in M^c (q)} \left ( \dfrac{1}{p^m} \right) \cdot \left( 1 - \dfrac{1}{p}\right) \right]\\
&=& 1 - \left( 1- \dfrac{1}{q}\right) \cdot \prod_{\text{$p \neq q$, prime}} \left[\left( 1-\dfrac{1}{p} \right )\sum_{m \in M^c (q)} \dfrac{1}{p^m} \right],
\end{eqnarray*}
as desired.
\end{proof}
Having obtained this, we are then able to get very concise and beautiful representations for these densities such as the following\footnote{In fact, those unhappy with our previous proof could directly prove this result instead and use it to derive our previous formulas.}.
\begin{proposition}\label{tempting_representation}
If $I(n)$ is the indicator function for the event $n \in M^c (q)$, then we have
\[
1-d(q) = \prod_{\text{$p$ prime}} \left[ \left(1 - \dfrac{1}{p}\right) \sum_{m=0} ^{\infty} \dfrac{I(p^m)}{p^m} \right].
\]
\end{proposition}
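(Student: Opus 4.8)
The plan is to obtain Proposition~\ref{tempting_representation} as a short rewrite of the second representation in Theorem~\ref{formulas},
\[
d(q) = 1 - \left(1 - \frac{1}{q}\right)\prod_{p \neq q,\ \mathrm{prime}}\left[\left(1 - \frac{1}{p}\right)\sum_{m \in M^c(q)} \frac{1}{p^m}\right].
\]
The single structural fact I need is the elementary observation that for a prime $p \neq q$ and an integer $m \geq 0$, one has $p^m \in M(q)$ if and only if $m \in M(q)$; equivalently, $I(p^m) = 1$ exactly when $m \in M^c(q)$. This follows straight from the recursion defining tower factorization: for $m \geq 2$ the tower factorization of $p^m$ is $p^{(f)}$ with $f$ the tower factorization of $m$, so $q$ appears in it precisely when it appears in $f$ (using $p \neq q$); for $m = 1$ the tower factorization of $p$ is just $p$, which contains no $q$, matching $1 \in M^c(q)$; and for $m = 0$ both $1$ and $0$ lie in $M^c(q)$ by convention, matching $1 \notin M(q)$.

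With that in hand, the argument is a two-step substitution into the displayed formula. First, for each prime $p \neq q$, the equivalence $m \in M^c(q) \iff I(p^m) = 1$ lets me re-index the inner sum as $\sum_{m \in M^c(q)} p^{-m} = \sum_{m=0}^{\infty} I(p^m)\,p^{-m}$. Second, I account for the absent $p = q$ factor: every $q^m$ with $m \geq 1$ is a power of $q$ and hence lies in $M(q)$, so $I(q^m) = 0$ there, while $I(q^0) = I(1) = 1$; therefore $\sum_{m=0}^{\infty} I(q^m)\,q^{-m} = 1$, and so $1 - \frac{1}{q} = \left(1 - \frac{1}{q}\right)\sum_{m=0}^{\infty} I(q^m)\,q^{-m}$. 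Substituting both identities into the displayed formula absorbs the stray $\left(1-\frac1q\right)$ into the product and replaces the index condition ``$p \neq q$'' by ``$p$ prime'', which gives exactly $1 - d(q) = \prod_{p\ \mathrm{prime}}\left[\left(1 - \frac1p\right)\sum_{m=0}^{\infty} I(p^m)\,p^{-m}\right]$.

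I do not expect a real obstacle here. Convergence of the infinite product, and hence the legitimacy of reassembling its factors, is already supplied by Theorem~\ref{formulas}, and each bracketed factor is automatically in $[0,1]$ since the inner sum is dominated by $\sum_{m \geq 0} p^{-m} = (1 - 1/p)^{-1}$. The only point that genuinely uses the conventions $1 \notin M(q)$ and $\{0,1\} \subseteq M^c(q)$ is the bookkeeping at $m = 0, 1$ that forces the ``$p = q$'' factor to collapse to exactly $1 - 1/q$; this is the one place to be careful. As the footnote to the proposition notes, one could instead prove the statement from scratch, bypassing Theorem~\ref{formulas}: writing $1 - d(q) = \prod_{p\ \mathrm{prime}} \pr\!\left(\text{the exact power of } p \text{ dividing } n \text{ lies in } M^c(q)\right)$ via the same independence heuristic, and using $\pr(p^m \text{ exactly divides } n) \to (1 - 1/p)\,p^{-m}$ together with the observation above; I would still present the short deduction as the main proof.
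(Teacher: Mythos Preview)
Your proposal is correct and is essentially the same argument as the paper's: both start from the second formula in Theorem~\ref{formulas}, use the observation that $I(p^m)=I(m)$ for primes $p\neq q$ (which the paper phrases as the multiplicative identity $I(p^n)=I(n)\cdot I(p)$) to rewrite each inner sum, and then absorb the stray $(1-1/q)$ as the $p=q$ factor since $\sum_{m\geq 0} I(q^m)q^{-m}=1$. Your extra care with the $m=0,1$ conventions and the convergence remark are fine additions but not substantive departures.
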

\begin{proof}
Note that $I(n)$ can be defined as the multiplicative function such that for all $n \geq 1$,
\[
I(p^n) = I(n) \cdot I(p),
\]
where $I(p)$ coincides with the indicator function for $p \neq q$.  Thus the last theorem can be rewritten
\begin{eqnarray*}
1-d(q) &=& \left( 1- \dfrac{1}{q}\right) \cdot \prod_{\text{$p \neq q$, prime}} \left[\left( 1-\dfrac{1}{p} \right )\sum_{m \in M^c (q)} \dfrac{1}{p^m} \right]\\
&=& \left( 1- \dfrac{1}{q}\right) \cdot \prod_{\text{$p \neq q$, prime}} \left[\left( 1-\dfrac{1}{p} \right )\sum_{m=0}^{\infty} \dfrac{I(p^m)}{p^m} \right] = \prod_{\text{$p$ prime}} \left[\left( 1-\dfrac{1}{p} \right )\sum_{m=0}^{\infty} \dfrac{I(p^m)}{p^m} \right],
\end{eqnarray*}
as desired.
\end{proof}

Because $I(m)$ as defined is a multiplicative function that behaves so simply on the primes, this above representation will likely lead the analytically inclined reader to consider the function $f(s) := \sum_{m\geq 1} I(m) m^{-s}$ [and perhaps $f(s)/\zeta(s)$].  Many promising things can be said and done with this function, and the authors believe that this approach \textit{should} be quite revealing.  However, as the authors are neither number theorists nor analysts, we personally were unable to exploit this formula for anything truly useful (though not for want of effort).

\ignore{
\begin{conjecture}
If $I(n)$ is the indicator function for the event $n \in M^c (q)$, then we conjecture\footnote{This is a `conjecture' because we never bothered to prove the details of this argument (because the authors were unable to get anything useful from this representation).}
\begin{eqnarray*}
1-d(q) &=& \prod_{\text{$p$ prime}} \left[ \lim_{s \to 1^+} \left(1 - \dfrac{1}{p^s}\right) \sum_{m=0} ^{\infty} \dfrac{I(p^m)}{p^{ms}} \right] = \lim_{s \to 1^{+}} \prod_{\text{$p$ prime}} \left[ \left(1 - \dfrac{1}{p^s}\right) \sum_{m=0} ^{\infty} \dfrac{I(p^m)}{p^{ms}} \right]\\
&=& \lim_{s \to 1^{+}} \dfrac{1}{\zeta(s)} \sum_{m=1} ^{\infty} \dfrac{I(m)}{m^{s}} = \lim_{s \to 1^{+}} (1-s) \sum_{m=1} ^{\infty} \dfrac{I(m)}{m^{s}} = \Res_{s=1} f(s),
\end{eqnarray*}
where $f(s)$ is the analytic continuation of $\sum_{m=1} ^{\infty} I(m)m^{-s}$.
\end{conjecture}
}

\section{Bounds and asymptotics}\label{sec:bounds}
Armed with the formulas of \textbf{Theorem \ref{formulas}}, we dive into some bounds, which ultimately lead us to a very good asymptotic understanding of $d(q)$.  The first of these results follows so readily from \textbf{Theorem \ref{formulas}} that it needs no proof.
\begin{theorem}\label{bounds}
For each prime $p \neq q$, let $S(p)$, $T(p)$, $A(p)$, and $B(p)$ be integer subsets such that $S(p) \subseteq M(q) \subseteq T(p)$ and $A(p) \subseteq M^{c} (q) \subseteq B(p)$.  Then we have
\begin{eqnarray*}
d(q) &\geq& 1 - \left( 1- \dfrac{1}{q}\right) \cdot \prod_{\text{$p \neq q$, prime}} \left[1 - \left( 1-\dfrac{1}{p} \right )\sum_{s \in S(p)} \dfrac{1}{p^s} \right]\\
d(q) &\leq& 1 - \left( 1- \dfrac{1}{q}\right) \cdot \prod_{\text{$p \neq q$, prime}} \left[1 - \left( 1-\dfrac{1}{p} \right )\sum_{t \in T(p)} \dfrac{1}{p^t} \right],\\
d(q) &\leq& 1 - \left( 1- \dfrac{1}{q}\right) \cdot \prod_{\text{$p \neq q$, prime}} \left[\left( 1-\dfrac{1}{p} \right )\sum_{a \in A(p)} \dfrac{1}{p^a} \right],\\
d(q) &\geq& 1 - \left( 1- \dfrac{1}{q}\right) \cdot \prod_{\text{$p \neq q$, prime}} \left[\left( 1-\dfrac{1}{p} \right )\sum_{b \in B(p)} \dfrac{1}{p^b} \right].
\end{eqnarray*}
\end{theorem}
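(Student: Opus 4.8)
The plan is to obtain all four inequalities directly from the three representations of $d(q)$ in \textbf{Theorem \ref{formulas}} by monotonicity of each factor in the Euler-type product, so no genuinely new idea is required; this is why the authors say ``it needs no proof.'' First I would fix a prime $p \neq q$ and observe that the quantity $\left(1-\frac1p\right)\sum_{m\in X}\frac{1}{p^m}$ is a sum of nonnegative terms indexed by a set $X$, hence is \emph{monotone nondecreasing} in $X$ under inclusion. Applying this with $X = S(p) \subseteq M(q)$ and $X = T(p) \supseteq M(q)$ gives, for each such $p$,
\[
\left(1-\frac1p\right)\sum_{s\in S(p)}\frac{1}{p^s} \;\le\; \left(1-\frac1p\right)\sum_{m\in M(q)}\frac{1}{p^m} \;\le\; \left(1-\frac1p\right)\sum_{t\in T(p)}\frac{1}{p^t},
\]
and likewise with $A(p) \subseteq M^c(q) \subseteq B(p)$ in the $M^c$-indexed sums.

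Next I would track how these per-prime inequalities propagate through the product. For the first two bounds I use the first formula of \textbf{Theorem \ref{formulas}}, in which the $p$-th factor is $1 - \left(1-\frac1p\right)\sum_{m\in M(q)}\frac1{p^m}$: enlarging the index set \emph{decreases} this factor. Since the convergence of the product (established in \textbf{Theorem \ref{formulas}}) guarantees every partial product is a product of numbers in $(0,1]$ — in particular each factor is nonnegative — replacing $M(q)$ by a subset $S(p)$ in each factor multiplies together larger nonnegative quantities, so the whole product increases; multiplying by the positive constant $1-\frac1q$ and subtracting from $1$ then \emph{reverses} the direction, giving $d(q) \ge 1 - (1-\frac1q)\prod_p[\cdots S(p)\cdots]$, and symmetrically the $T(p)$ bound. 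For the last two bounds I use the second formula, whose $p$-th factor is $\left(1-\frac1p\right)\sum_{m\in M^c(q)}\frac1{p^m}$ directly; here $A(p)\subseteq M^c(q)$ \emph{shrinks} the factor, so the product shrinks, so $1 - (1-\frac1q)\prod_p[\cdots]$ \emph{grows}, yielding the stated lower bound, and $B(p)\supseteq M^c(q)$ gives the upper bound.

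The one point deserving a sentence of care — and the closest thing to an ``obstacle'' — is that multiplying inequalities factor-by-factor inside an infinite product is only valid because all the factors in sight are nonnegative and the products converge; both facts are already supplied by \textbf{Theorem \ref{formulas}}, since the $M^c(q)$-sums are dominated by $\sum_{m\ge0}p^{-m} = (1-\frac1p)^{-1}$, making every factor lie in $(0,1]$, and finitely truncating or sub-/super-setting the index sets cannot push a factor outside $[0,1]$. Once this is noted, each of the four displayed inequalities is an immediate consequence, and I would simply remark that the result follows by applying the monotonicity observation termwise to the appropriate representation from \textbf{Theorem \ref{formulas}}.
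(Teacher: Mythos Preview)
Your argument is correct and is exactly the termwise monotonicity reasoning the paper has in mind when it says the result ``follows so readily from \textbf{Theorem \ref{formulas}} that it needs no proof.'' One small slip of wording: in your treatment of the last two inequalities the labels are reversed --- replacing $M^c(q)$ by the smaller set $A(p)$ makes the right-hand side \emph{grow}, which is the stated \emph{upper} bound $d(q)\le\cdots$, while $B(p)$ yields the \emph{lower} bound --- but your chain of inequalities itself points in the correct direction.
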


After a little thought, we are able to use this result to obtain the following easier bounds.  These are more useful because they involve only finite sums and products, and yet in the limit they still squeeze together.
\begin{proposition}\label{easy_bound}
Let $P$ be any set of primes with $q \notin P$, let $S \subseteq M(q)$, let $A \subseteq M^c (q) \subseteq B$.  Then we have
\begin{eqnarray*}
d(q) &\geq& 1 - \left( 1- \dfrac{1}{q}\right) \cdot \prod_{p \in P} \left[1 - \left( 1-\dfrac{1}{p} \right )\sum_{s \in S} \dfrac{1}{p^s} \right],\\
d(q) &\geq& 1 - \left( 1- \dfrac{1}{q}\right) \cdot \prod_{p \in P} \left[\left( 1-\dfrac{1}{p} \right )\sum_{b \in B} \dfrac{1}{p^b} \right],\\
d(q) &\leq& 1 - \dfrac{q^{q}-q^{q-1}}{q^{q}-1} \cdot \dfrac{1}{\zeta(q)} \prod_{p \in P} \left[ \dfrac{\left( 1 - \dfrac{1}{p}\right)}{1 - \dfrac{1}{p^q}} \sum_{a \in A}\dfrac{1}{p^a} \right].
\end{eqnarray*}
Moreover, if $q \in S$, then we also have
\[
d(q) \geq 1 - \dfrac{q^{q+1}-q^q}{q^{q+1} - 1} \cdot \dfrac{1}{\zeta (q+1)} \prod_{p \in P} \left[ \dfrac{1 - \left(1-\dfrac{1}{p} \right ) \displaystyle \sum_{s \in S} \dfrac{1}{p^s}}{ 1 - \dfrac{1}{p^{q+1}}} \right].
\]
\end{proposition}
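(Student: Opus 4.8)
\emph{Overview and the first two bounds.} The plan is to read all four inequalities off \textbf{Theorem \ref{bounds}} by choosing the approximating families appropriately. For the first inequality, use the bound of \textbf{Theorem \ref{bounds}} involving $S(p)$ with $S(p) := S$ for $p \in P$ and $S(p) := \emptyset$ otherwise (valid since $S \subseteq M(q)$): every factor with $p \notin P$ equals $1 - (1 - 1/p)\cdot 0 = 1$, so the product over primes $\neq q$ collapses to the product over $P$. For the second inequality, use the bound involving $B(p)$ with $B(p) := B$ for $p \in P$ and $B(p) := \{0,1,2,\ldots\}$ otherwise (valid since $M^c(q) \subseteq B$ and $M^c(q) \subseteq \{0,1,2,\ldots\}$): now each factor with $p \notin P$ equals $(1-1/p)\sum_{m \geq 0}p^{-m} = (1-1/p)\cdot\frac{p}{p-1} = 1$, so again only the $P$-factors remain. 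These two require no further work.

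\emph{Two facts about $M(q)$ and $M^c(q)$.} The remaining two bounds use: (a) every positive multiple of $q$ lies in $M(q)$ --- immediate, since $q \mid n$ forces $q$ to occur as a base of the tower factorization of $n$; and (b) $\{0,1,\ldots,q-1\} \subseteq M^c(q)$. For (b), $0$ and $1$ are in $M^c(q)$ by convention, and for $2 \leq m \leq q-1$, writing $m = p_1^{e_1}\cdots p_k^{e_k}$ gives $p_i \leq m$ and $2^{e_i} \leq p_i^{e_i} \leq m$, hence $e_i \leq \log_2 m < m$; so by strong induction every prime occurring in the tower factorization of $m$ is $\leq m < q$, and in particular $q$ is absent.

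\emph{The third and the ``moreover'' bounds.} For the third, use the bound involving $A(p)$ with $A(p) := A$ for $p \in P$ and $A(p) := \{0,1,\ldots,q-1\}$ for $p \notin P$; the off-$P$ factors are then $(1-1/p)\sum_{m=0}^{q-1}p^{-m} = 1 - p^{-q}$, so $d(q) \leq 1 - (1-1/q)\prod_{p \in P}\left[(1-1/p)\sum_{a \in A}p^{-a}\right]\prod_{p \notin P\cup\{q\}}(1 - p^{-q})$. Since $\prod_{p}(1-p^{-q}) = 1/\zeta(q)$, we may replace $\prod_{p \notin P\cup\{q\}}(1-p^{-q})$ by $\big(\zeta(q)(1-q^{-q})\prod_{p\in P}(1-p^{-q})\big)^{-1}$; combined with the identity $(1-1/q)/(1-q^{-q}) = (q^q-q^{q-1})/(q^q-1)$, this rearranges the bound into exactly the stated form. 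For the ``moreover'' inequality, use the bound involving $S(p)$ with $S(p) := S$ for \emph{every} prime $p \neq q$; for $p \notin P$ estimate $1 - (1-1/p)\sum_{s\in S}p^{-s} \leq 1 - (1-1/p)p^{-q} \leq 1 - p^{-(q+1)}$, the last step because $p \geq 2$ gives $(1-1/p)p^{-q} \geq \frac12 p^{-q} \geq p^{-(q+1)}$; keeping the $P$-factors intact and collapsing the off-$P$ factors into $\prod_{p \notin P\cup\{q\}}(1 - p^{-(q+1)})$, the same rearrangement with $q+1$ in place of $q$ and the identity $(1-1/q)/(1-q^{-(q+1)}) = (q^{q+1}-q^q)/(q^{q+1}-1)$ yields the claim.

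\emph{Main obstacle.} Nothing here is deep: the only substantive point is fact (b) (the short induction), and the rest is bookkeeping --- confirming each approximating set sits inside $M(q)$ or $M^c(q)$, tracking how inequalities flip through the minus sign in $d(q) = 1 - (1-1/q)\prod(\cdots)$, and using that all products in sight converge and may be regrouped freely (which follows from \textbf{Theorem \ref{formulas}} together with the Euler product for $1/\zeta$). I would also flag that the hypothesis $q \in S$ is used precisely to make the final off-$P$ estimate $1 - (1-1/p)\sum_{s\in S}p^{-s} \leq 1 - p^{-(q+1)}$ valid.
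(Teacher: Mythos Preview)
Your proof is correct and follows essentially the same approach as the paper: the same set families $S(p)$, $B(p)$, $A(p)$ are chosen for the first three inequalities, and the fourth is obtained by the same estimate $1-(1-1/p)\sum_{s\in S}p^{-s}\le 1-p^{-(q+1)}$ combined with the Euler product for $1/\zeta(q+1)$, only with the $\zeta$-rewrite and the truncation done in the opposite order. You also supply the short induction showing $\{0,1,\ldots,q-1\}\subseteq M^c(q)$, which the paper uses without comment.
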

\begin{proof}
The first two inequalities follow immediately from \textbf{Theorem \ref{bounds}} by using the set families
\[
S(p) := \begin{cases} S, \qquad &\text{for $p \in P$}\\ \emptyset, \qquad &\text{for $p \notin P$}\end{cases}, \qquad \text{and} \qquad B(p) := \begin{cases} B, \qquad &\text{for $p \in P$}\\ \{0, 1, 2, \ldots\}, \qquad &\text{for $p \notin P$}\end{cases}.
\]
The third inequality follows again from \textbf{Theorem \ref{bounds}} by using the set family
\[
A(p) := \begin{cases} A, \qquad &\text{for $p \in P$}\\ \{0, 1, 2, \ldots , q-1\}, \qquad &\text{for $p \notin P$}\end{cases}.
\]
To prove the final inequality, note from \textbf{Theorem \ref{formulas}} we have
\begin{eqnarray*}
d(q) &=& 1 - \dfrac{1- \dfrac{1}{q}}{1 - \dfrac{1}{q^{q+1}}} \cdot \prod_{\text{$p \neq q$, prime}} \dfrac{\left[1 - \left(1-\dfrac{1}{p} \right )\sum_{m \in M (q)} \dfrac{1}{p^m} \right]}{ 1 - \dfrac{1}{p^{q+1}}} \prod_{p} \left[ 1 - \dfrac{1}{p^{q+1}} \right]\\
&=& 1 - \dfrac{q^{q+1}-q^q}{q^{q+1} - 1} \cdot \dfrac{1}{\zeta (q+1)} \prod_{\text{$p \neq q$, prime}} \dfrac{\left[1 - \left(1-\dfrac{1}{p} \right )\sum_{m \in M (q)} \dfrac{1}{p^m} \right]}{ 1 - \dfrac{1}{p^{q+1}}}.
\end{eqnarray*}
Now for all primes $p \neq q$, since $q \in S$ and $p \geq 2$, we have
\[
\dfrac{\displaystyle  1 - \left(1 - \dfrac{1}{p} \right) \sum_{m \in M} \dfrac{1}{p^m}}{1-\dfrac{1}{p^{q+1}}} \leq \dfrac{\displaystyle  1 - \left(1 - \dfrac{1}{p} \right) \sum_{s \in S} \dfrac{1}{p^s}}{1-\dfrac{1}{p^{q+1}}} \leq \dfrac{\displaystyle 1 - \left(1 - \dfrac{1}{p} \right) \dfrac{1}{p^q}}{1-\dfrac{1}{p^{q+1}}} \leq \dfrac{\displaystyle 1 - \dfrac{1}{p} \cdot \dfrac{1}{p^q}}{1-\dfrac{1}{p^{q+1}}} = 1,
\]
which we are then able to use to truncate the infinite product as desired.
\end{proof}
For fixed values of $q$, these allow us to use the computer to rigorously calculate digits of $d(q)$, and they also allow the following bounds.
\begin{proposition}\label{asymptotic_bounds}
For all $q$, we have
\[
1 - \dfrac{q^{q+1}-q^{q}}{q^{q+1}-1} \cdot \dfrac{1}{\zeta (q+1)} \leq d(q) \leq 1 - \dfrac{q^{q}-q^{q-1}}{q^{q}-1} \cdot \dfrac{1}{\zeta(q)}.
\]
Therefore, we have
\[
\dfrac{1}{2^{q+1}} (1 - q^{-1} ) - \dfrac{1}{q^q} \leq d(q) - \dfrac{1}{q} \leq \dfrac{1}{2^{q}} (1 + q^{-1}).
\]
\end{proposition}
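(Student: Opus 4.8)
The first displayed inequality follows directly from Proposition~\ref{easy_bound} by taking $P = \emptyset$. Indeed, setting $P = \emptyset$ and $A = \emptyset$ in the third inequality there makes the product over $p \in P$ empty, which gives the claimed upper bound; and setting $P = \emptyset$ and $S = \{q\}$ in the fourth inequality gives the claimed lower bound. The latter is legitimate because the tower factorization of $q$ is just $q^{(1)} = q$, so $q \in M(q)$, and hence $\{q\} \subseteq M(q)$ with $q \in S$.

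For the second displayed inequality, the plan is to reduce everything to the elementary estimates $1 + 2^{-s} + 3^{-s} \le \zeta(s) \le 1 + 2^{-s} + \int_{2}^{\infty} t^{-s}\,dt = 1 + 2^{-s}\cdot\frac{s+1}{s-1}$. First I would rewrite the two rational prefactors as $\frac{q^{q}-q^{q-1}}{q^{q}-1} = \frac{q-1}{q}\cdot\frac{1}{1-q^{-q}}$ and $\frac{q^{q+1}-q^{q}}{q^{q+1}-1} = \frac{q-1}{q}\cdot\frac{1}{1-q^{-q-1}}$, so that the first display becomes
\[
\frac{q-1}{q}\left(1 - \frac{1}{(1-q^{-q-1})\,\zeta(q+1)}\right) \ \le\ d(q) - \frac1q \ \le\ \frac{q-1}{q}\left(1 - \frac{1}{(1-q^{-q})\,\zeta(q)}\right).
\]
For the upper bound I would simply drop the factor $\frac{1}{1-q^{-q}} \ge 1$ to get $d(q) - \frac1q \le \frac{q-1}{q}\left(1 - \frac1{\zeta(q)}\right) \le \frac{q-1}{q}(\zeta(q)-1)$, and then the upper $\zeta$-estimate above gives $\frac{q-1}{q}(\zeta(q)-1) \le \frac{q-1}{q}\cdot 2^{-q}\cdot\frac{q+1}{q-1} = 2^{-q}(1 + q^{-1})$, exactly as required.

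For the lower bound the bookkeeping is tighter, since the explicit $-q^{-q}$ slack is small. I would first absorb the factor $\frac{1}{1-q^{-q-1}}$ via $\frac{1}{(1-q^{-q-1})\zeta(q+1)} \le \frac{1}{\zeta(q+1)} + \frac{q^{-q-1}}{1-q^{-q-1}} \le \frac{1}{\zeta(q+1)} + \frac{8}{7}q^{-q-1}$ (valid for $q \ge 2$, since then $q^{-q-1} \le \frac18$), and then use the lower $\zeta$-estimate: writing $w := 2^{-q-1} + 3^{-q-1}$ we have $\zeta(q+1) \ge 1 + w$, hence $1 - \frac1{\zeta(q+1)} \ge \frac{w}{1+w} \ge w - w^2$. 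This yields $d(q) - \frac1q \ge \frac{q-1}{q}\left(w - w^2 - \frac87 q^{-q-1}\right)$. Comparing with the target $\frac{q-1}{q}2^{-q-1} - q^{-q}$ and using $w - 2^{-q-1} = 3^{-q-1}$, the claim reduces to
\[
\frac{q-1}{q}\left(3^{-q-1} - w^2 - \frac87 q^{-q-1}\right) \ \ge\ -q^{-q}.
\]
If the parenthesized quantity is nonnegative there is nothing to prove; otherwise, since $0 \le \frac{q-1}{q} \le 1$, it suffices to show $w^2 + \frac87 q^{-q-1} \le 3^{-q-1} + q^{-q}$. Using $w \le 2^{-q}$ and $\frac87 q^{-q-1} \le \frac47 q^{-q}$ (both for $q \ge 2$), this follows in turn from $2^{-2q} \le 3^{-q-1} + \frac37 q^{-q}$, which I would verify by hand for $q \in \{2,3\}$ and then note that for $q \ge 4$ one even has $2^{-2q} = 4^{-q} \le \frac13 3^{-q} = 3^{-q-1}$, because $(4/3)^q \ge (4/3)^4 > 3$.

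The step I expect to be the main obstacle is precisely this endgame for the lower bound: the one-term bound $\zeta(q+1) \ge 1 + 2^{-q-1}$ is not quite enough (it fails near $q = 5$), so one is forced to retain the $n = 3$ term of $\zeta(q+1)$ and to dispatch the small cases $q \in \{2,3\}$ by explicit computation. Everything else is routine algebra and crude series/integral comparison.
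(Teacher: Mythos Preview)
Your argument is correct and follows the same two-step outline as the paper: the first display comes from Proposition~\ref{easy_bound} with $P=\emptyset$, and the second comes from elementary estimates on $\zeta$. The paper's own proof simply says ``routine computations after using the elementary bounds $1 + 2^{-s} \le \zeta(s) \le 1 + 2^{-s} + \int_{2}^{\infty} x^{-s}\,dx$'' and stops there.

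The one substantive difference is that you actually carry out those computations, and in doing so you notice something the paper glosses over: the one-term lower bound $\zeta(q+1) \ge 1 + 2^{-q-1}$ is \emph{not} quite enough for the lower half of the second display. Indeed, with only that bound one obtains (in your notation, with $w$ replaced by $2^{-q-1}$) roughly $\frac{q-1}{q}\bigl(2^{-q-1} - 2^{-2q-2}\bigr)$, and since $q^{-q}$ eventually decays faster than $4^{-q-1}$, the target $\frac{q-1}{q}2^{-q-1} - q^{-q}$ is not met; a direct check shows the one-term bound fails already at $q=7$. Your remedy---keeping the $3^{-(q+1)}$ term so that $w = 2^{-q-1} + 3^{-q-1}$ and then reducing to $4^{-q} \le 3^{-q-1} + \tfrac{3}{7}q^{-q}$, dispatched by $(4/3)^q > 3$ for $q\ge 4$ and by hand for $q\in\{2,3\}$---is clean and correct. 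So your proof is not only in the same spirit as the paper's but in fact tightens a small gap in its sketch.
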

\begin{proof}
The first inequalities follow immediately from the last proposition by taking $P = \emptyset$.  The second inequalities then follow from routine computations after using the elementary bounds
\[
1 + \dfrac{1}{2^{s}} \leq \zeta(s) \leq 1 + \dfrac{1}{2^{s}} + \int_{2}^{\infty} \dfrac{\dd{x}}{x^s} = 1 + \dfrac{2+s-1}{2^s (s-1)},
\]
which are valid for all real values of $s$ greater than $1$.
\end{proof}
This shows that for $q$ large, the value of $d(q)$ is very close to $1/q$.  That is, the probability that a number contains $q$ in its tower factorization is very close to the probability that it is divisible by $q$.  But also note that the value is bounded away from $1/q$ by an additive term on the order of $2^{-q}$.  This can be explained by noting that $2^{-q}$ is essentially the probability that the term $2^q$ appears in the prime factorization of $n$, and since $2$ is the smallest prime, it makes sense that the contribution due to terms of this type be fundamentally larger than the rest.
\paragraph*{}These bounds are sufficiently tight to give another very believable result:
\begin{corollary}
The sequence $d(q)$ is strictly decreasing, and it decreases to $0$.
\end{corollary}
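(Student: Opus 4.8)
\emph{Proof proposal.} The plan is to read off both assertions from Proposition~\ref{asymptotic_bounds}, treating every prime except the first one uniformly, and handling $q=2$ by a crude ad hoc estimate.

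\textbf{Convergence to $0$.} First I would observe that every positive multiple of $q$ lies in $M(q)$, so $d(q)\ge 1/q>0$, while the second chain of inequalities in Proposition~\ref{asymptotic_bounds} gives $d(q)\le \tfrac1q+\tfrac1{2^{q}}\bigl(1+q^{-1}\bigr)$. Letting $q\to\infty$ through the primes, both $\tfrac1q$ and $\tfrac1{2^{q}}(1+q^{-1})$ tend to $0$, so $d(q)\to 0$ by squeezing (this also re-proves $d(q)>0$ without the trivial inclusion).

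\textbf{Strict decrease, generic step.} Let $q<q'$ be consecutive primes; the only structural facts I need are $q'\ge q+1$ and hence $2^{q'}\ge 2^{q+1}$. From Proposition~\ref{asymptotic_bounds},
\[
d(q)\ \ge\ \frac1q+\frac{1}{2^{q+1}}\Bigl(1-\frac1q\Bigr)-\frac{1}{q^{q}},
\qquad
d(q')\ \le\ \frac1{q'}+\frac{1}{2^{q'}}\Bigl(1+\frac1{q'}\Bigr).
\]
Subtracting, using $\tfrac1{2^{q'}}(1+\tfrac1{q'})\le \tfrac1{2^{q+1}}(1+\tfrac1q)$ and $\tfrac1q-\tfrac1{q'}\ge\tfrac1{q(q+1)}$, one finds
\[
d(q)-d(q')\ \ge\ \frac1q\left(\frac{1}{q+1}-\frac{1}{2^{q}}-\frac{1}{q^{q-1}}\right),
\]
so it suffices to check $\tfrac1{q+1}>\tfrac1{2^{q}}+\tfrac1{q^{q-1}}$ for every prime $q\ge 3$: each term on the right is at most $\tfrac1{2(q+1)}$, with strict inequality somewhere (the tight case is $q=3$, where $\tfrac14>\tfrac18+\tfrac19$). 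Hence $d(q)>d(q')$ whenever $q\ge 3$.

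\textbf{The base case $q=2$.} Here the packaged bound only yields $d(3)\le\tfrac12$ non-strictly, so I would argue directly: $d(2)\ge\tfrac12$ since $M(2)$ contains every even number, while the $\zeta$-form of the upper bound in Proposition~\ref{asymptotic_bounds} gives $d(3)\le 1-\tfrac{18}{26}\,\zeta(3)^{-1}$, and the elementary estimate $\zeta(3)\le 1+\tfrac{4}{2^{3}\cdot 2}=\tfrac54$ recorded in its proof yields $d(3)\le 1-\tfrac{18}{26}\cdot\tfrac45=\tfrac{29}{65}<\tfrac12\le d(2)$. Together with the generic step this shows the whole sequence is strictly decreasing, and combined with the first part it decreases to $0$.

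\textbf{Main obstacle.} Everything here is a short computation except the base case: the uniform estimates of Proposition~\ref{asymptotic_bounds} are weakest precisely at $q=2$ (the correction term of order $2^{-q}$ is largest there), so strictness at the very first step cannot come from the general inequality and must be squeezed out by hand via the inclusion of the even numbers in $M(2)$ together with an explicit bound on $\zeta(3)$.
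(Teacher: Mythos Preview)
Your argument is correct and is exactly the kind of computation the paper intends: the corollary is stated without proof, merely as a consequence of the bounds in Proposition~\ref{asymptotic_bounds}, and you have carefully carried out that verification, including the needed separate treatment of the step $q=2\to q'=3$.

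One small simplification for your base case: you do not actually need the inclusion $2\mathbb{Z}\subset M(2)$ or the numerical estimate $\zeta(3)\le 5/4$. The first (sharper) pair of inequalities in Proposition~\ref{asymptotic_bounds} gives
\[
d(2)\ \ge\ 1-\frac{4}{7\,\zeta(3)},\qquad d(3)\ \le\ 1-\frac{9}{13\,\zeta(3)},
\]
both involving the \emph{same} $\zeta(3)$, so that $d(2)-d(3)\ge\bigl(\tfrac{9}{13}-\tfrac{4}{7}\bigr)/\zeta(3)=\tfrac{11}{91\,\zeta(3)}>0$ with no further input. This removes the ad hoc flavor of the first step and makes the whole proof uniform in spirit, though your version is of course also valid.
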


\subsection*{Numerics for $d(2)$}
Let us now return to address the original question: ``what is the value of $d(2)$?"  Using \textbf{Proposition \ref{asymptotic_bounds}} provides the bounds $0.5246243585 \leq d(2) \leq 0.5947152656$, which is not yet refined enough to rule out tempting conjectures like $d(2) = \gamma$ or $d(2) = 1/\sqrt{3}$ that our initial data from \textbf{Section \ref{sec:introduction}} may have suggested.

\paragraph*{}Nonetheless, we can use the bounds of \textbf{Proposition \ref{easy_bound}} to write a program that (eventually) calculates $d(2)$ to within arbitrarily precision.  More specifically, using these bounds with $A = M^c (2) \cap \{1, 2, \ldots , 20\}$, with $S = M(2) \cap \{1, 2, \ldots , 20\}$, and with $P$ being the set of the first $25,000$ primes yields
\[
0.577350376 < d(2) < 0.577350486,
\]
and since $1/\sqrt{3} = 0.57735026\ldots$ and $\gamma = 0.577215 \ldots$, this definitively (and perhaps anticlimactically) shows that $\gamma < 1/\sqrt{3} < d(2)$, which disproves any such conjecture.  It is curious to note though how very close $d(2)$ is to $1/\sqrt{3}$, and the authors have no explanation for this.

\paragraph*{}Using this same technique, we are able to compute numeric bounds on various other values of $d(q)$, which we present in the appendix.

\section{Conclusion}
So what is the value of $d(2)$?  Apparently it's just slightly larger than $1/\sqrt{3}$, but what is an exact answer?  Is $d(2)$ algebraic?  Is it expressible in terms of elementary functions or more satisfying limits, or is it possible that perhaps $d(2)$ is in some sense its own transcendental mathematical constant?   Unfortunately, after many attempts, the authors were unable to make headway on any of these questions let alone the corresponding questions for $d(q)$ in general.
\paragraph*{}Nonetheless, the authors believe this problem is very interesting---especially because the representation of $d(q)$ in \textbf{Proposition \ref{tempting_representation}} is so tempting.  The problem has a certain fractal-like self-similarity, and it feels like some beautiful idea is just waiting to be applied.  The authors hope for progress on the problem, and we wish our readers the best with these loose ends.

\bibliography{mybib}

\newpage
\section{Appendix}
Here we tabulate numeric bounds found on $d(q)$ for various values of $q$.  These were found by using \textbf{Proposition \ref{easy_bound}} and a simple Maple script.  The floating point values in the fifth column are a rigorous lower bound for $d(q)$ (on top) and a rigorous upper bound for $d(q)$ (on bottom).  More complete values for these bounds have been computed, but they are truncated to just 35 digits here.

\paragraph*{}The columns $p$, $a$, and $s$ are parameters for the algorithms used.  These parameters correspond to the size of $P$, (roughly) the size of $A$, and (roughly) the size of $S$ as in \textbf{Proposition \ref{easy_bound}}.  Notably, $p$ is the number of primes used in the estimation, which seems to matter much more than the size of $A$ [affecting the upper bound] or $S$ [affecting the lower bound].  Data and code are available on request.
\begin{center}
\begin{tabular}{|c|c|c|c|n{1}{36}|c|}\hline
$q$ & $p$ & $a$ & $s$ & \text{Bounds (First 35 Digits [more available])} & Digits Known\\\hline\hline
\nextrow{2}{25000}{20}{20}
{0.577350376056807813001171222749099027793826886470544627211675882194082714}
{0.577350485047678584952747233500637548585202776756754996491063963297074978}
{6}\\\hline
\nextrow{3}{6000}{100}{100}{0.388807379263994405608}{0.388807379271511226974}{10}\\\hline
\nextrow{5}{5000}{100}{100}{0.2151189846955856203278886157360448757908}{0.2151189846955856203310804143009889413781}{19}\\\hline
\nextrow{7}{2500}{100}{100}
{0.1465008912284380428191169151038108078952016850611}{0.1465008912284380428191169151051370285686287787482}{29}\\\hline
\nextrow{11}{2000}{200}{200}
{0.09113458105567412165027231631480880531869134253505}
{0.09113458105567412165027231631480880531869134616405}
{44}\\\hline
\nextrow{13}{2000}{200}{200}
{0.0769798105202947775196592008915016896290643581467495222992}
{0.0769798105202947775196592008915016896290643581467495324792}
{52}\\\hline
\nextrow{17}{2000}{200}{200}
{0.0588271246021194036767367088849109584242431438294286088658355576978825563}
{0.0588271246021194036767367088849109584242431438294286088658358505456589114}
{60}\\\hline
\nextrow{19}{2000}{200}{200}
{0.0526324829734675179643555340250633283774469991562117016273662733238280360295}
{0.0526324829734675179643555340250633283774469991562117016273665680982357656219}
{60}\\\hline
\nextrow{23}{2000}{200}{200}
{0.0434783178894840833442936676959388965942544846621537400365341488199880437161}
{0.0434783178894840833442936676959388965942544846621537400365344464424578523589}
{60}\\\hline
\nextrow{29}{2000}{200}{200}
{0.0344827595199070388388844049792239641953873954627365728107837569520705630160}
{0.0344827595199070388388844049792239641953873954627365728107840573734977701028}
{60}\\\hline
\nextrow{31}{2000}{200}{200}
{0.0322580647414500545950163257009657078579131023779285736837851195575316015814}
{0.0322580647414500545950163257009657078579131023779285736837854206711740787331}
{60}\\\hline
\nextrow{37}{2000}{200}{200}
{0.0270270270305666835231340923503436156463771224864202885139864189587084207034}
{0.0270270270305666835231340923503436156463771224864202885139867216999922085428}
{60}\\\hline
\nextrow{41}{2000}{200}{200}
{0.0526324829734675179643555340250633283774469991562117016273662733238280360295}
{0.0526324829734675179643555340250633283774469991562117016273665680982357656219}
{60}\\\hline
\nextrow{43}{2000}{200}{200}
{0.0243902439026608523841187301974189252492456339087527260100490728868251728131}
{0.0243902439026608523841187301974189252492456339087527260100493764485460440884}
{60}\\\hline
\nextrow{47}{2000}{200}{200}
{0.0212765957446843281878803870513876380451765585785993642707367688487106754059}
{0.0212765957446843281878803870513876380451765585785993642707370733792455494618}
{60}\\\hline
\end{tabular}
\end{center}

\begin{center}
\begin{tabular}{|c|c|c|c|n{1}{36}|c|}\hline
$q$ & $p$ & $a$ & $s$ & \text{Bounds (First 35 Digits [More available])} & Digits Known\\\hline\hline
\nextrow{53}{2000}{200}{200}
{0.0188679245283019412562238832810092145871135099055870468782572222350689036016}
{0.0188679245283019412562238832810092145871135099055870468782575275150636666011}
{60}\\\hline
\nextrow{59}{2000}{200}{200}
{0.0169491525423728822085928950180309408613925856773051105788352397597828390175}
{0.0169491525423728822085928950180309408613925856773051105788355456368049711755}
{60}\\\hline
\nextrow{61}{2000}{200}{200}
{0.0163934426229508198854168208219767903619261302180219681216219806030511287992}
{0.0163934426229508198854168208219767903619261302180219681216222866529828268882}
{60}\\\hline
\nextrow{67}{2000}{200}{200}
{0.0149253731343283582122927865384419386637802338726817463136753178858896539878}
{0.0149253731343283582122927865384419386637802338726817463136756243926122949098}
{60}\\\hline
\nextrow{71}{2000}{200}{200}
{0.0140845070422535211269693391067176967649594829351580201486906371592417410550}
{0.0140845070422535211269693391067176967649594829351580201486909439276005088579}
{60}\\\hline
\nextrow{73}{2000}{200}{200}
{0.0136986301369863013699152280583923981060340353762078282187845491706626305503}
{0.0136986301369863013699152280583923981060340353762078282187848560590872921958}
{60}\\\hline
\nextrow{79}{2000}{200}{200}
{0.0126582278481012658227856268365541662938488265742992373710486955727305542222}
{0.0126582278481012658227856268365541662938488265742992373710490027848771828103}
{60}\\\hline
\nextrow{83}{2000}{200}{200}
{0.0120481927710843373493976414373571004130184548993846798041887017697123434429}
{0.0120481927710843373493976414373571004130184548993846798041890091716718522464}
{60}\\\hline
\nextrow{89}{2000}{200}{200}
{0.0112359550561797752808988772032116167559082693745180629757011563061883886497}
{0.0112359550561797752808988772032116167559082693745180629757014639608762829085}
{60}\\\hline
\nextrow{97}{2000}{200}{200}
{0.0103092783505154639175257731989992019340708096559895437616545481014641811975}
{0.0103092783505154639175257731989992019340708096559895437616548560444882403055}
{60}\\\hline
\nextrow{101}{2000}{200}{200}
{0.0099009900990099009900990099011853616102032207443407084385975285689705070493}
{0.0099009900990099009900990099011853616102032207443407084385978366390337675019}
{60}\\\hline
\end{tabular}
\end{center}
\end{document}